\newtheorem{thm}{Theorem}[section]
\newtheorem{lemma}[thm]{Lemma}
\newtheorem{remark}[thm]{Remark}
\theoremstyle{definition}
\newtheorem{proposition}[thm]{Proposition}
 \theoremstyle{remark}
\numberwithin{equation}{section}
\begin{document}

\def\frakl{{\mathfrak L}}
\def\frakg{{\mathfrak G}}
\def\bbf{{\mathbb F}}
\def\bbl{{\mathbb L}}
\def\bbz{{\mathbb Z}}
\def\bbr{{\mathbb R}}
\def\bbc{{\mathbb C}}

\def\bvp{\bf{\varphi}}

\def\ad{\textsf{ad}}
\def\GL{\text{GL}}
\def\Der{\mbox{\rm Der}}
\def\Hom{\textsf{Hom}}
\def\ind{\textsf{ind}}
\def\res{\textsf{res}}
\def\gl{\frak{gl}}
\def\sl{\frak{sl}}
\def\Ker{\textsf{Ker}}
\def\Lie{\textsf{Lie}}
\def\id{\textsf{id}}
\def\det{\textsf{det}}
\def\Lie{\textsf{Lie}}
\def\Aut{\textsf{Aut}}
\def\Ext{\textsf{Ext}}
\def\Coker{\textsf{{Coker}}}
\def\dim{\textsf{{dim}}}
\def\pr{\mbox{\sf pr}}
\def\SL{\text{SL}}

\def\geqs{\geqslant}

\def\ba{{\mathbf a}}
\def\bd{{\mathbf d}}
\def\bbk{{\mathbb K}}
\def\co{{\mathcal O}}
\def\cn{{\mathcal N}}
\def\cv{{\mathcal V}}
\def\cz{{\mathcal Z}}
\def\cq{{\mathcal Q}}
\def\cf{{\mathcal F}}
\def\cc{{\mathcal C}}
\def\ca{{\mathcal A}}

\def\sl{{\frak{sl}}}

\def\ggg{{\frak g}}
\def\lll{{\frak l}}
\def\hhh{{\frak h}}
\def\nnn{{\frak n}}
\def\sss{{\frak s}}
\def\bbb{{\frak b}}
\def\ccc{{\frak c}}
\def\ooo{{\mathfrak o}}
\def\ppp{{\mathfrak p}}
\def\uuu{{\mathfrak u}}

\def\p{{[p]}}
\def\modf{\text{{\bf mod}$^F$-}}
\def\modr{\text{{\bf mod}$^r$-}}

\title[Invariants and dualities of parabolic groups] {Invariants and dualities of a certain parabolic group}
\author{Bin Liu}
\address{School of Mathematical Sciences, East China Normal University, Shanghai, 200241, China.} \email{1918724868@qq.com}
\subjclass[]{}
 \keywords{enhanced group, Schur-Weyl duality, degenerate double Hecke algebra}
\thanks{This work is partially supported by the NSF of China (Grant: 12071136).}

\begin{abstract} In this note, I will prove a conjecture in \cite{BYY}, which is related to the invariants of a maximal parabolic subgroup of $\GL_{n+1}$. Consequently, the natural tensor invariants of this typical maximal parabolic subgroup of $\GL_{n+1}$ are determined when $n\geq r$.
\end{abstract}

\maketitle
\section*{0. Introduction}
The purpose of this note is to prove Conjecture \eqref{eq: 0.1} proposed in \cite{BYY}, which is actually much related to tensor invariants of an distinguished maximal parabolic subgroup of $\GL_{n+1}$. The authors in \cite{BYY} defined an enhanced reductive algebraic group $\underline{G}$, which is naturally a semi-reductive group. In order to illustrate the tensor invariants of $\underline{\GL_n}=\GL_n\ltimes \mathbb{C}^n$ in $\text{End}_{\mathbb{C}}(\underline{V}^{\otimes r})$ with $\underline{V}=\mathbb{C}^{n+1}$, they introduced a finite-dimensional degenerate double Hecke algebra $D(n,r)$, and got a Levi Schur-Weyl duality for $\underline{V}=\mathbb{C}^{n+1}$ in \cite{BYY} (see \cite[Theorem 4.3]{BYY}) which is an analog of classical Schur-Weyl duality.

By \cite[Example 5.2]{BYY}, $\mathbb{C}\Psi(\frak S_r) \subsetneq \text{End}_{\underline{\GL_n} \rtimes \textbf{G}_m}(\underline{V}^{\otimes r})$ when $n < r$. However in the case $n \geq r$, there is a conjecture 
\begin{equation}\label{eq: 0.1}
\text{End}_{\underline{\GL_n} \rtimes \textbf{G}_m}(\underline{V}^{\otimes r}) =\mathbb{C}\Psi(\frak S_r)
\end{equation}
  (see \cite[Conjecture 5.4]{BYY}). In \S\ref{2} of this article, I will give a proof of \eqref{eq: 0.1}. Then it is a position to
investigate the enhanced tensor invariants, and this is listed in \S\ref{3}.

Note that $\underline{\GL_n} \rtimes \textbf{G}_m$ is a canonical maximal parabolic subgroup of $\GL_{n+1}$. It is worth mentioning that the study of invariants beyond reductive groups is a challenge (see \cite{G1}-\cite{G3}). Consequently the invariant property of $\underline{G} \rtimes \textbf{G}_m$ has its own interest.

\section{Levi Schur-Weyl duality.}\label{1}

\subsection{A general setting-up.} In this note, all vector spaces are over a complex number field of $\mathbb{C}$. Let $G$ be a connected reductive algebraic group over $\mathbb{C}$, and $(V,\rho)$  be a finite-dimensional rational representation of $G$ with representation space $V$ over $\mathbb{C}$. We can define an enhanced reductive algebraic group $\underline{G}=G \times_{\rho} V$ as follows (see \cite{BYY} explicitly):

Regard $\underline{G}$ as a set, we have $\underline{G}=G \times V$; For any $(g_1, v_1), (g_2, v_2) \in G \times V$,
\begin{align}\label{def of group}
(g_1, v_1) \cdot (g_2, v_2):=(g_1g_2, \rho(g_1)(v_2)+v_1).
\end{align}

From now on, we will write down $e^v$ for $(e,v)$ (where $e \in G$ is the identity), and always suppose that $\dim V=n$, i.e. $V \cong \mathbb{C}^n$. Let $G_1$ and $G_2$ be subgroups of $G$, we denote by $\langle G_1,G_2\rangle$ the subgroup of $G$ generated by $G_1$ and $G_2$. All representations for algebraic groups are always assumed to be rational.

In particular, assume that $G$ is a closed subgroup of $\GL(V)$. Since $\GL(V)$ can naturally act on $V$, we can defined the enhanced reductive algebraic group $\underline{G}$ as above (where $\rho$ is just the natural representation of $G$ on $V$). Note that $V$ is a closed subgroup of $\underline{G}$, then an irreducible $G$-module becomes naturally an irreducible module of $\underline{G}$ with trivial $V$-action. The isomorphism classes of irreducible rational representations of $\underline{G}$ coincide with the ones of $G$.

Let $\underline{V}:=V \oplus \mathbb{C}\eta \cong \mathbb{C}^{n+1}$, which is called the enhanced space of $V$. Naturally $\underline{V}$ becomes a $\underline{G}$-module that is defined for any $\underline{g}=(g,v) \in \underline{G}$ and $\underline{u}=u+a\eta \in \underline{V}$ with $g \in G$, $u,v \in V$ and $a \in \mathbb{C}$, via
\begin{align}\label{def of module}
\underline{g} \cdot \underline{u}:=\rho(g)(u)+av+a\eta.
\end{align}
It is easy to see that this module is a rational module of $\underline{G}$.

In particular, if $G$ is a classical group, then $G \subseteq \GL(V)$. Since $\GL(V)$ can naturally act on $V$, we can defined $\underline{G}$ and $\underline{G}$-module $\underline{V}$ as above.

Since $\GL(V) \cong \GL_n$ and $\GL(\underline{V}) \cong \GL_{n+1}$, we can regard $\GL(V)$ and $\GL(\underline{V})$ as $\GL_n$ and $\GL_{n+1}$ respectively. There are canonical imbeddings of algebraic groups $\GL_n \hookrightarrow \GL_{n+1}$ given by 
\begin{align}
g \mapsto \text{diag}(g,1):=\left(\begin{matrix}
g & 0 \\
0 & 1
\end{matrix}\right), \forall g \in \GL_n.
\end{align}
and $\textbf{G}_m \hookrightarrow \GL_{n+1}$ given by \begin{align}
c \mapsto \text{diag}(1,\cdots,1,c):=\left(\begin{matrix}
I_n \\
 & c \\
\end{matrix}\right), \forall c \in \textbf{G}_m=\mathbb{C}^{\times}.
\end{align}
where $I_n$ is $n$ identity matrix and $\textbf{G}_m$ is the multiplicative algebraic group. In this sense, both $\GL_n$ and $\textbf{G}_m$ can be viewed as closed subgroups of $\GL_{n+1}$. 

Obviously, the closed subgroups $\langle \GL(V),\textbf{G}_m\rangle$ and $\langle \underline{\GL(V)},\textbf{G}_m\rangle$ are isomorphic to $\GL(V) \times {\textbf{G}}_m$ and $\underline{\GL(V)} \rtimes {\textbf{G}}_m$ respectively. We identify all throughout the note.

Note that $\underline{\GL(V)} \rtimes {\textbf{G}}_m$ is actually a parabolic subgroup of $\GL(\underline{V})$ associated with the Levi subgroup $\GL(V) \times {\textbf{G}}_m$. Clearly, $\underline{\GL(V)}$ is a closed subgroup of $\underline{\GL(V)} \rtimes {\textbf{G}}_m$, then $e^v \in \underline{\GL(V)} \rtimes \textbf{G}_m$ for all $v \in V$.

The fact that $\GL(\underline{V})$ can naturally act on $\underline{V}$ implies that $\underline{V}$ can be viewed as a natural $\GL(\underline{V})$-module. Note that $\GL(V), \underline{\GL(V)}, \GL(V) \times \textbf{G}_m$ and $\underline{\GL(V)} \rtimes \textbf{G}_m$ are all closed subgroups of $\GL(\underline{V})$, hence $\underline{V}$ can be viewed as a natural $\GL({V})$-module, $\underline{\GL(V)}$-module, $\GL(V) \times \textbf{G}_m$-module and $\underline{\GL(V)} \rtimes \textbf{G}_m$-module respectively. It is easy to see that the natural $\underline{\GL(V)}$-module $\underline{V}$ coincides with (\ref{def of module}), where $\rho$ is just the natural representation of $\GL(V)$ on $V$.

\subsection{Classical Schur-Weyl duality.}
We can define a representation $(\underline{V}^{\otimes r},\Phi)$ of $\GL(\underline{V})$ on $\underline{V}^{\otimes r}$
\begin{equation}
	\Phi(g)(v_1 \otimes v_2 \otimes \cdots \otimes v_r)=g(v_1) \otimes g(v_2) \otimes \cdots \otimes g(v_r).
\end{equation}
for any $g \in \GL(\underline{V})$ and any monomial tensor product $v_1 \otimes v_2 \otimes \cdots \otimes v_r \in \underline{V}^{\otimes r}$.

In the meanwhile, $(\underline{V}^{\otimes r},\Psi)$ naturally becomes a representation of $\frak S_r$ with following permutation action
\begin{equation}
	\Psi(\sigma)(v_1 \otimes v_2 \otimes \cdots \otimes v_r)=v_{\sigma^{-1}(1)} \otimes v_{\sigma^{-1}(2)} \otimes \cdots \otimes v_{\sigma^{-1}(r)}.
\end{equation}
for any $\sigma \in \frak S_r$ and any monomial tensor product $v_1 \otimes v_2 \otimes \cdots \otimes v_r \in \underline{V}^{\otimes r}$.

The classical Schur-Weyl duality implies that the images of $\Phi$ and $\Psi$ are double centralizers in $\text{End}_{\mathbb{C}}(\underline{V}^{\otimes r})$, namely
\begin{equation}
	\text{End}_{\GL(\underline{V})}(V^{\otimes r})=\mathbb{C}\Psi(\frak S_r),
	\text{End}_{\frak S_r}(\underline{V}^{\otimes r})=\mathbb{C}\Phi(\frak \GL(\underline{V})).
\end{equation}
Here $\mathbb{C}\Psi(\frak S_r)$ and $\mathbb{C}\Phi(\frak \GL(\underline{V}))$ are the subalgebras of $\text{End}_{\mathbb{C}}(\underline{V}^{\otimes r})$ that generated by $\Psi(\frak S_r)$ and $\Phi(\frak \GL(\underline{V}))$ respectively.

\subsection{Degenerate double Hecke algebras.} From now on, we always assume that $G=\GL(V) \cong \GL_n$.

For given positive integers $r$ and $l$ with $r>l$, we define the $l$th degerate double Hecke algebra $\mathcal{H}^l_r$ of $\frak S_r$, which is generated by $\{{\rm{\textbf{x}}}_\sigma \mid \sigma \in \frak S_l\}$ and $\{\rm{\textbf{s}}_\textit{i} \mid \textit{i}=1,2,\cdots,\textit{r}-1\}$ with relations as follows.
\begin{align}\label{DDHA1}
\rm{\textbf{s}}_\textit{i}^2=1, \rm{\textbf{s}}_\textit{i}\rm{\textbf{s}}_\textit{j}=\rm{\textbf{s}}_\textit{j}\rm{\textbf{s}}_\textit{i} \text{  for } 0<\textit{i} \neq \textit{j}\leq \textit{r}-1, |\textit{j}-\textit{i}|>1;
\end{align}
\begin{align}\label{DDHA2}
 \rm{\textbf{s}}_\textit{i}\rm{\textbf{s}}_\textit{j}\rm{\textbf{s}}_\textit{i}=\rm{\textbf{s}}_\textit{j}\rm{\textbf{s}}_\textit{i}\rm{\textbf{s}}_\textit{j} \text{  for } 0<\textit{i} \neq \textit{j}\leq \textit{r}-1, |\textit{j}-\textit{i}|=1;
\end{align}
\begin{align}\label{DDHA3}
{\rm{\textbf{x}}}_\sigma{\rm{\textbf{x}}}_\mu={\rm{\textbf{x}}}_{\sigma \circ \mu} \text{ for } \sigma,\mu \in \frak S_l;
\end{align}
\begin{align}\label{DDHA4}
\rm{\textbf{s}}_\textit{i}{\rm{\textbf{x}}}_\sigma={\rm{\textbf{x}}}_{\rm{\textbf{s}}_\textit{i} \circ \sigma},{\rm{\textbf{x}}}_\sigma\rm{\textbf{s}}_\textit{i}={\rm{\textbf{x}}}_{\sigma \circ \rm{\textbf{s}}_\textit{i}} \text{ for } \sigma \in \frak S_\textit{\textit{l}},\textit{i}<\textit{l};
\end{align}
\begin{align}\label{DDHA5}
\rm{\textbf{s}}_\textit{i}{\rm{\textbf{x}}}_\sigma={\rm{\textbf{x}}}_\sigma={\rm{\textbf{x}}}_\sigma\rm{\textbf{s}}_\textit{i} \text{ for } \sigma \in \frak S_\textit{\textit{l}},\textit{i}>\textit{l}.
\end{align}
This is an infinite-dimensional associative algebra.

The degenerate double Hecke algebra $\mathcal{H}_r$ of $\frak S_r$ is an associative algebra with generators $\rm{\textbf{s}}_\textit{i}$ ($i=1,2,\cdots,r-1$) and ${\rm{\textbf{x}}}_\sigma^{(l)}$ ($\sigma \in \frak S_r, l=0,1,\cdots,r$) with relations as (\ref{DDHA1})-(\ref{DDHA5}) in which ${\rm{\textbf{x}}}_\sigma,{\rm{\textbf{x}}}_\mu$ are replaced by ${\rm{\textbf{x}}}_\sigma^{(l)},{\rm{\textbf{x}}}_\mu^{(l)}$, and additional ones:
\begin{align}
{\rm{\textbf{x}}}_\delta^{(l)}{\rm{\textbf{x}}}_\gamma^{(k)}=0 \text{ for } \delta \in \frak S_l, \gamma \in \frak S_k \text{ with } l\neq k.
\end{align}

For any $m \in \mathbb{Z}_{>0}$, we define $\underline{m}:=\{1,2,\cdots,m\}$. Suppose $V=\bigoplus_{i=1}^n \mathbb{C}\eta_i$, then $\{\eta_1,\cdots,\eta_n,\eta\}$ is a basis of $\underline{V}$. Clearly, all $\eta_\textbf{i}:=\eta_{i_1} \otimes \cdots \otimes \eta_{i_r}$ for $\textbf{i}=\{i_1,\cdots,i_r\} \in \underline{(n+1)}^r$ form a basis of $\underline{V}^{\otimes r}$ where $\eta_{n+1}:=\eta$. For any $I \subseteq \underline{r}$, if $i \in I$, we set $V_i:=V$, otherwise $V_i:=\mathbb{C}\eta$. Now we define a subspace $\underline{V}_I^{\otimes r}$ of $\underline{V}^{\otimes r}$ as $\underline{V}_I^{\otimes r}:=V_1\otimes \cdots \otimes V_r$. In particular, $\underline{V}_\phi^{\otimes r}=\mathbb{C}\eta\otimes \cdots \otimes \mathbb{C}\eta$ with $r$ copies. Suppose $I=\{i_1,\cdots,i_l\} $ where $i_1<i_2<\cdots<i_l$, it is not hard to see that $\underline{V}_I^{\otimes r}$ has a basis as follows 
\begin{align*}
\{\eta_\textbf{j}:=\eta_{j_1} \otimes \cdots \otimes \eta_{j_r} \mid j_{i_k} \in \underline{n},k=1,\cdots,l;j_{d}=n+1 \text{ for } d \neq i_k\}.
\end{align*}
Let $\underline{V}_l^{\otimes r}:=\bigoplus\limits_{I \subseteq \underline{r},~ \sharp I=l}\underline{V}_I^{\otimes r}$, thus $\underline{V}^{\otimes r}$ has a decomposition as a linear space $\underline{V}^{\otimes r}=\bigoplus_{l=0}^r \underline{V}_l^{\otimes r}$.

Given $\sigma \in \frak S_l$ and $I=\{i_1,\cdots,i_l\} $ where $i_1<i_2<\cdots<i_l$, assume that $v_1 \otimes \cdots \otimes v_r \in \underline{V}_I^{\otimes r}$. Let $w_{i_k}:=v_{i_{\sigma^{-1}(k)}}$ with $1 \leq k \leq l$, while for $j \notin I$, set $w_j:=v_j$. We can define $x_\sigma \in \text{End}_\mathbb{C}(\underline{V}_I^{\otimes r})$ with
\begin{align*}
x_\sigma(v_1 \otimes \cdots \otimes v_r)=w_1 \otimes \cdots \otimes w_r.
\end{align*}
Now we extend $x_\sigma$ to an element $x_\sigma^{I}$ of $\text{End}_\mathbb{C}(\underline{V}_l^{\otimes r})$ by annihilating any other summand $\underline{V}_J^{\otimes r}$ with $J \neq I$ and $\sharp J=l$.

\cite[Lemma 3.3]{BYY} gives a representation of the degenerate double Hecke algebra $\mathcal{H}_r$ on $\underline{V}^{\otimes r}$.

\begin{lemma}\label{rep}
	There is a representation $\Xi:\mathcal{H}_r \rightarrow \text{End}_\mathbb{C}(\underline{V}^{\otimes r})$ defined via:
	\begin{itemize}
		\item[(1)] $\Xi|_{\mathbb{C}\frak S_r}=\Psi$;
		\item[(2)] For any ${\rm{\mathbf{x}}}_\sigma^{(l)} \in \mathcal{H}_r^l$ with $\sigma \in \frak S_l$, $l=0,1,\cdots,r$,  $\Xi({\rm{\mathbf{x}}}_\sigma^{(l)})|_{\underline{V}_k^{\otimes r}}=\left\{
		\begin{aligned}
		0 ,k \neq l;\\
		x_\sigma^{\underline{l}},k=l.
		\end{aligned}
		\right.$ 
	\end{itemize}
\end{lemma}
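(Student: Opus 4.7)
The plan is to verify that $\Xi$, as defined on generators, respects every defining relation of $\mathcal{H}_r$. The key structural fact making this tractable is the decomposition $\underline{V}^{\otimes r}=\bigoplus_{l=0}^{r}\underline{V}_{l}^{\otimes r}=\bigoplus_{I\subseteq\underline{r}}\underline{V}_{I}^{\otimes r}$, which lets each relation be checked summand-by-summand on monomial basis vectors $v_1\otimes\cdots\otimes v_r$.

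The braid-type relations (\ref{DDHA1}) and (\ref{DDHA2}) involve only the ${\rm{\textbf{s}}}_i$, which are sent to $\Psi({\rm{\textbf{s}}}_i)$, so they follow immediately from the fact that $\Psi$ is already a representation of $\frak S_r$. For the cross-vanishing relation ${\rm{\textbf{x}}}_\delta^{(l)}{\rm{\textbf{x}}}_\gamma^{(k)}=0$ with $l\neq k$, the image of $\Xi({\rm{\textbf{x}}}_\gamma^{(k)})$ lies in $\underline{V}_{\underline{k}}^{\otimes r}\subseteq\underline{V}_{k}^{\otimes r}$, while $\Xi({\rm{\textbf{x}}}_\delta^{(l)})$ vanishes on $\underline{V}_{k}^{\otimes r}$ since $l\neq k$, so the composition is zero. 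For (\ref{DDHA3}), on $\underline{V}_{\underline{l}}^{\otimes r}$ the operator $x_\sigma^{\underline{l}}$ is the standard permutation representation of $\frak S_l$ acting on the first $l$ tensor slots (the remaining slots carrying fixed $\eta$'s), so $x_\sigma^{\underline{l}}\circ x_\mu^{\underline{l}}=x_{\sigma\circ\mu}^{\underline{l}}$ by direct composition; on every other $\underline{V}_{I}^{\otimes r}$ both sides vanish.

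The substantive work lies in the mixed relations (\ref{DDHA4}) and (\ref{DDHA5}). The subtle point is that $\Psi({\rm{\textbf{s}}}_i)$ maps $\underline{V}_{I}^{\otimes r}$ onto $\underline{V}_{s_i(I)}^{\otimes r}$, and one must ensure $x_\sigma^{\underline{l}}$ is not accidentally activated on the wrong side of an equation. A direct check shows that $s_i(\underline{l})\neq\underline{l}$ occurs only at the boundary value $i=l$, which is precisely the index excluded from both (\ref{DDHA4}) and (\ref{DDHA5}); more generally, for $i\neq l$ one has $s_i(I)=\underline{l}$ iff $I=\underline{l}$. Thus for $i<l$ the verification reduces to a calculation on $\underline{V}_{\underline{l}}^{\otimes r}$: on a monomial $v_1\otimes\cdots\otimes v_l\otimes\eta^{\otimes(r-l)}$, using $(\sigma\circ s_i)^{-1}=s_i\circ\sigma^{-1}$, one reads off $x_\sigma^{\underline{l}}\circ\Psi({\rm{\textbf{s}}}_i)=x_{\sigma\circ s_i}^{\underline{l}}$ and symmetrically $\Psi({\rm{\textbf{s}}}_i)\circ x_\sigma^{\underline{l}}=x_{s_i\circ\sigma}^{\underline{l}}$. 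For $i>l$ both $i$ and $i+1$ lie outside $\underline{l}$, so on $\underline{V}_{\underline{l}}^{\otimes r}$ these slots both carry $\eta$ and $\Psi({\rm{\textbf{s}}}_i)$ acts as the identity there, yielding (\ref{DDHA5}).

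The main obstacle is not any single relation but the bookkeeping for (\ref{DDHA4})/(\ref{DDHA5}): one has to track simultaneously which $\underline{V}_{I}^{\otimes r}$ summands are moved by $\Psi({\rm{\textbf{s}}}_i)$ and on which of them $x_\sigma^{\underline{l}}$ is nonzero, to rule out spurious contributions. Once the observation about the excluded boundary value $i=l$ is in hand, the remaining computations reduce to routine manipulations of permutations of the first $l$ tensor factors.
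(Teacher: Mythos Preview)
The paper does not give its own proof of this lemma; it is quoted verbatim from \cite[Lemma 3.3]{BYY} and stated without argument. Your approach---verifying on generators that each defining relation of $\mathcal{H}_r$ is preserved, using the decomposition $\underline{V}^{\otimes r}=\bigoplus_{I\subseteq\underline{r}}\underline{V}_I^{\otimes r}$ to localize the check---is the natural one and is correct. The key observation you isolate, that $s_i(\underline{l})=\underline{l}$ precisely when $i\neq l$, so the excluded index $i=l$ in (\ref{DDHA4}) and (\ref{DDHA5}) is the unique value at which $\Psi({\rm{\textbf{s}}}_i)$ would move vectors into or out of the support $\underline{V}_{\underline{l}}^{\otimes r}$ of $x_\sigma^{\underline{l}}$, is exactly what makes the mixed relations hold. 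With that in hand, the remaining computations reduce, as you say, to the standard permutation action of $\frak S_l$ on the first $l$ tensor slots, and your verification goes through.
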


\begin{remark}
	Lemma \ref{rep} implies that $\Xi(\tau)(\underline{V}_l^{\otimes r}) \subseteq \underline{V}_l^{\otimes r}$ for all $\tau \in \mathcal{H}_r^l$, so we can give a representation of $\mathcal{H}^l_r$ on $\underline{V}_l^{\otimes r}$.
	
	By (1), for $\rm{\mathbf{s}}_\textit{i} \in \mathcal{H}_r$ with $i=1,2,\cdots,r-1$, we have $\Xi(\mathbf{s}_\textit{i})=\Psi(\mathbf{s}_\textit{i})$.
\end{remark}

For $0 \leq l \leq r$, we define a representation $\Xi^{(l)}:\mathcal{H}_r^l \rightarrow \text{End}_\mathbb{C}(\underline{V}^{\otimes r})$ via:
\begin{align}
\text{For } \tau \in \mathbb{C}\frak S_r, \text{ set } \Xi^{(l)}(\tau)|_{\underline{V}_k^{\otimes r}}:=\left\{
\begin{aligned}
\Psi(\tau)|_{\underline{V}_l^{\otimes r}}, \text{ } k=l;\\
0, \text{ } k \neq l.
\end{aligned} \text{ }
\right.
\end{align}
\begin{align}
\text{For any } {\rm{\mathbf{x}}}_\sigma^{(l)} \in \mathcal{H}_r^l \text{ with } \sigma \in \frak S_l, \text{ set } \Xi^{(l)}({\rm{\mathbf{x}}}_\sigma^{(l)}):=\Xi({\rm{\mathbf{x}}}_\sigma^{(l)}).
\end{align}

Clearly, $\Xi^{(l)}$ can be viewed as a representation on $\underline{V}_l^{\otimes r}$, i.e. homomorphism $\Xi^{(l)}:\mathcal{H}_r^l \rightarrow \text{End}_\mathbb{C}(\underline{V}_l^{\otimes r})$.

\subsection{Levi Schur-Weyl duality} Set $D(n,r):=\Xi(\mathcal{H}_r)$, which is called the finite-dimensional degenerate double Hecke algebra (DDHA for short) of $\frak S_r$. We should note that $\mathcal{H}_r$ is infinite-dimensional.

Let $D(n,r)_l:=\Xi^{(l)}(\mathcal{H}^l_r)$ for $l=0,1,\cdots,r$. By \cite[Lemma 3.5(1)]{BYY}, we have a decomposition of $D(n,r)$ as a vector space
\begin{align}\label{sum}
D(n,r)=\bigoplus_{l=0}^r D(n,r)_l.
\end{align}

Now we list the Levi Schur-Weyl duality and Parabolic Schur-Weyl duality in \cite{BYY}.

\begin{thm}(\cite[Theorem 4.3]{BYY})
The following duality holds:
\begin{align}
\text{End}_{\GL_n \times \textbf{G}_m}(\underline{V}^{\otimes r})=D(n,r);
\end{align}
\begin{align}
\text{End}_{D(n,r)}(\underline{V}^{\otimes r})=\mathbb{C}\Psi(\GL_n \times \textbf{G}_m).
\end{align}
\end{thm}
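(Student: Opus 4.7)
The plan is to reduce both identities to the classical Schur--Weyl duality for $\GL_n$ acting on $V^{\otimes l}$, weighted along the distinct $\textbf{G}_m$-weights in $\underline{V}^{\otimes r}$. First I would exploit the torus: since $\textbf{G}_m$ acts on $\underline{V}_l^{\otimes r}$ by the scalar $c^{r-l}$ and the $r+1$ exponents are pairwise distinct, every $(\GL_n\times\textbf{G}_m)$-equivariant endomorphism preserves the decomposition $\underline{V}^{\otimes r}=\bigoplus_{l=0}^{r}\underline{V}_l^{\otimes r}$. Matching the vector space decomposition~(\ref{sum}) of $D(n,r)$, it then suffices to establish the block-wise identity $\text{End}_{\GL_n}(\underline{V}_l^{\otimes r})=\Xi^{(l)}(\mathcal{H}_r^l)$ for every $0\leq l\leq r$.

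Next I would analyse the left-hand side. Each $\underline{V}_I^{\otimes r}$ with $|I|=l$ is $\GL_n$-isomorphic to $V^{\otimes l}$ (the $\eta$-positions carry the trivial representation), so
\[
\text{End}_{\GL_n}(\underline{V}_l^{\otimes r}) = \bigoplus_{|I|=|J|=l}\text{Hom}_{\GL_n}(\underline{V}_I^{\otimes r},\underline{V}_J^{\otimes r}),
\]
with each summand identified with $\text{End}_{\GL_n}(V^{\otimes l})$, which by classical Schur--Weyl duality is precisely the image of $\mathbb{C}\mathfrak{S}_l$ (for every $n\geq 1$). On the right, $\Xi^{(l)}(\mathbf{x}_\sigma^{(l)})$ for $\sigma\in\mathfrak{S}_l$ produces the full image of $\mathbb{C}\mathfrak{S}_l$ on the ``diagonal'' block $(\underline{l},\underline{l})$, while $\Psi(\mathbf{s}_i)$ carries $\underline{V}_I^{\otimes r}$ to $\underline{V}_{s_i(I)}^{\otimes r}$. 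Since $\mathfrak{S}_r$ acts transitively on $l$-subsets of $\underline{r}$, products of the form $\Psi(w)\cdot\Xi^{(l)}(\mathbf{x}_\sigma^{(l)})\cdot\Psi(w')$ with $w(\underline{l})=J$ and $w'(\underline{l})=I$ span the $(I,J)$-block as $\sigma$ varies. A routine verification that these elements really centralise $\GL_n\times\textbf{G}_m$ --- $\mathbf{s}_i$ commutes with all of $\GL(\underline{V})$, and $\mathbf{x}_\sigma^{(l)}$ permutes $V$-tensorands inside $\underline{V}_{\underline{l}}^{\otimes r}$ while preserving $\textbf{G}_m$-weights --- completes the first duality.

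For the second duality I would invoke the double centralizer theorem: since $\GL_n\times\textbf{G}_m$ is reductive, $\underline{V}^{\otimes r}$ is a semisimple module and $\mathbb{C}\Phi(\GL_n\times\textbf{G}_m)$ is a semisimple subalgebra of $\text{End}_{\mathbb{C}}(\underline{V}^{\otimes r})$, so bicommutation yields $\text{End}_{D(n,r)}(\underline{V}^{\otimes r})=\mathbb{C}\Phi(\GL_n\times\textbf{G}_m)$ (the $\Psi$ in the displayed statement appears to be a typo for $\Phi$, as $\Psi$ was reserved for the symmetric-group action). I expect the main obstacle to lie in the combinatorial bookkeeping above: one must carefully align the abstract relations (\ref{DDHA1})--(\ref{DDHA5}) --- especially the asymmetric behaviour of $\mathbf{s}_i\mathbf{x}_\sigma^{(l)}$ for $i<l$ versus $i>l$ --- with the $\mathfrak{S}_r$-orbit structure on $l$-subsets, so that varying $\sigma\in\mathfrak{S}_l$ with fixed transport permutations $w,w'$ genuinely surjects onto each $\text{Hom}_{\GL_n}(\underline{V}_I^{\otimes r},\underline{V}_J^{\otimes r})$ rather than collapsing distinct choices to the same endomorphism.
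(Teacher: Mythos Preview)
The paper does not supply its own proof of this theorem: it is quoted verbatim as \cite[Theorem 4.3]{BYY} and used as background input for the later arguments in \S\ref{2}. Consequently there is no in-paper argument against which to compare your proposal.

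That said, your outline is the standard and correct route to this result. Decomposing by $\textbf{G}_m$-weight forces any equivariant endomorphism to preserve $\bigoplus_l \underline{V}_l^{\otimes r}$; each $\underline{V}_I^{\otimes r}$ is $\GL_n$-isomorphic to $V^{\otimes l}$; classical Schur--Weyl duality on $V^{\otimes l}$ identifies each $\Hom_{\GL_n}(\underline{V}_I^{\otimes r},\underline{V}_J^{\otimes r})$ with the image of $\mathbb{C}\frak S_l$; and the transport operators $E_{J,I}$ together with $x_\sigma^{\underline l}$ exhaust these blocks. Your reading of the second displayed equation is also right: $\Psi$ there should be $\Phi$. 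The only place to be careful is exactly the one you flag --- showing that $\Xi^{(l)}$ hits every $(I,J)$-block surjectively --- and the paper's basis $\{E_{J,I}\circ x_\sigma^I\}$ for $D(n,r)_l$ (recalled from \cite[Remark 3.6]{BYY} in the proof of Lemma~\ref{structure}) is precisely the statement that this bookkeeping works out.
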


\begin{thm}(\cite[Theorem 5.3]{BYY})\label{A parabolic}
	Set $D(n,r)^{V}=\{\phi \in D(n,r) \mid \Phi(e^v) \circ \phi=\phi \circ \Phi(e^v), \forall v \in V\}$.
	Then the following holds:
	\begin{align}
	\text{End}_{\underline{\GL_n} \rtimes \textbf{G}_m}(\underline{V}^{\otimes r})=D(n,r)^{V}.
	\end{align}
\end{thm}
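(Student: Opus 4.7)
The plan is to deduce the parabolic statement directly from the Levi Schur-Weyl duality already quoted. The key structural observation is that the parabolic subgroup $\underline{\GL_n}\rtimes\textbf{G}_m$ is generated, as an algebraic group, by its Levi factor $\GL_n\times\textbf{G}_m$ together with the abelian unipotent radical $\{e^v\mid v\in V\}$; more precisely, using the multiplication rule \eqref{def of group} in $\underline{\GL_n}$, every element of $\underline{\GL_n}\rtimes\textbf{G}_m$ can be written in the form $e^v\cdot g\cdot c$ with $v\in V$, $g\in\GL_n$, $c\in\textbf{G}_m$.

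First I would note that since $\Phi$ is a group homomorphism, an endomorphism $\phi\in\text{End}_\mathbb{C}(\underline{V}^{\otimes r})$ commutes with $\Phi(\underline{\GL_n}\rtimes\textbf{G}_m)$ if and only if it commutes with $\Phi$ on the generating set $(\GL_n\times\textbf{G}_m)\cup\{e^v\mid v\in V\}$. This separates the centralizer condition into two independent commutation requirements.

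Second, by the Levi Schur-Weyl duality, the condition that $\phi$ commutes with $\Phi(\GL_n\times\textbf{G}_m)$ is equivalent to $\phi\in D(n,r)$. The remaining condition that $\phi$ commutes with $\Phi(e^v)$ for every $v\in V$ is, by definition, exactly the condition $\phi\in D(n,r)^V$. Combining the two yields the asserted equality $\text{End}_{\underline{\GL_n}\rtimes\textbf{G}_m}(\underline{V}^{\otimes r})=D(n,r)^V$.

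There is no substantive obstacle here once the Levi Schur-Weyl duality is in hand; the theorem is essentially a formal corollary of it. The only point meriting a brief verification is the generation statement above, which is an immediate unwinding of the semidirect product structure set up in Section \ref{1}, together with the observation that $\textbf{G}_m$ normalizes both $\GL_n$ and $\{e^v\mid v\in V\}$ inside $\GL_{n+1}$. Thus the real content of the parabolic duality is already carried by the Levi version, and the enrichment by the unipotent radical only contributes the defining constraint cutting $D(n,r)$ down to $D(n,r)^V$.
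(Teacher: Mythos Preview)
The paper does not supply its own proof of this theorem; it is quoted verbatim from \cite[Theorem 5.3]{BYY} and then used as input for the subsequent arguments in \S\ref{2}. Your proposal is correct and is exactly the natural deduction one expects: since every element of $\underline{\GL_n}\rtimes\textbf{G}_m$ factors as a product of an element of $\GL_n\times\textbf{G}_m$ with some $e^v$, the centralizer of the parabolic on $\underline{V}^{\otimes r}$ is the intersection of $\text{End}_{\GL_n\times\textbf{G}_m}(\underline{V}^{\otimes r})$ with the centralizer of all $\Phi(e^v)$, and the Levi duality identifies the first factor with $D(n,r)$. There is nothing further to compare.
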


\section{Tensor invariant of enhanced reductive group $\textbf{G}_m \ltimes \underline{\GL(V)}$.}\label{2}
Keep the notations as in Section \ref{1}, and in this section, we assume that $G=\GL(V) \cong \GL_n$. For the symmetric group $\frak S_r$, we denote by $(i,i+1)$ for
$i=1,\cdots, r-1$, the transposition just interchanging $i$ and $i+1$, and fixing the others.

Given $\sigma \in \mathbb{C}\Psi(\frak S_r)$ and $I \subseteq \underline{r}$, define $\sigma^{[I]} \in \text{End}_\mathbb{C}(\underline{V}^{\otimes r})$ as below:
\begin{align}\label{def}
 \sigma^{[I]}|_{\underline{V}_J^{\otimes r}}:=\left\{
 \begin{aligned}
 \sigma|_{\underline{V}_I^{\otimes r}},  \text{ } I=J; \\
0, \text{ }, I \neq J.
 \end{aligned}
 \right.
\end{align}
By definition, $\sigma^{[I]}(\underline{V}_l^{\otimes r}) \subseteq \underline{V}_l^{\otimes r}$.
For $I=\{i_1, \cdots i_l\}$ and $J=\{j_1, \cdots j_l\}$ with $i_1 < \cdots <i_l$, $j_1 < \cdots < j_l$ and $0 \leq l \leq r$, there exists $\varepsilon_{J,I} \in \frak S_r$ satisfying $\varepsilon_{J,I}(i_t)=j_t$ for all $1 \leq t \leq l$. Now we define $E_{J,I}:=(\Psi(\varepsilon_{J,I}))^{[I]}$, and note that $E_{J,I}=\text{id}^{[I]}$ when $l=0$ (i.e. $I=J=\phi$). Although the element $\varepsilon_{J,I}$ in $\frak S_r$ satisfying $\varepsilon_{J,I}(i_t)=j_t$ ($\forall 1 \leq t \leq l$) is not unique, $(\Psi(\varepsilon_{J,I}))^{[I]}$ does not depend on the choice of $\varepsilon_{J,I}$. It is easy to see that $E_{J,I}(\underline{V}_I^{\otimes r}) \subseteq \underline{V}_J^{\otimes r}$. Let $D_{[I]}:=\{\sigma^{[I]} \mid \sigma \in \mathbb{C} \Psi(\frak S_r)\}$, it is clear that $\sum\limits_{I \subseteq \underline{r},~\sharp I=l}D_{[I]}$ is a direct sum, i.e. $\bigoplus\limits_{I \subseteq \underline{r},~\sharp I=l}D_{[I]}$. Now we suppose $n \geq r$.

\begin{lemma}\label{structure}
	$D(n,r)_l=\bigoplus\limits_{I \subseteq \underline{r},~\sharp I=l}D_{[I]}$. 
\end{lemma}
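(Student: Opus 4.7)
The plan is to identify both $D(n,r)_l$ and $\bigoplus_{\sharp I=l}D_{[I]}$ with a common explicit subspace of $\text{End}_\mathbb{C}(\underline{V}_l^{\otimes r})$. First I would set up the block decomposition: for each $I=\{i_1<\cdots<i_l\}$ with $\sharp I=l$, let $\phi_I:\underline{V}_I^{\otimes r}\xrightarrow{\sim}V^{\otimes l}$ be the natural isomorphism that deletes the $\eta$-factor at each position outside $I$. This gives $\text{End}_\mathbb{C}(\underline{V}_l^{\otimes r})=\bigoplus_{\sharp I=\sharp J=l}\text{Hom}_\mathbb{C}(\underline{V}_I^{\otimes r},\underline{V}_J^{\otimes r})$ with each summand identified with $\text{End}_\mathbb{C}(V^{\otimes l})$ via $\phi_J(-)\phi_I^{-1}$. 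Let $M_l:=\bigoplus_{\sharp I=\sharp J=l}\mathbb{C}\Psi(\frak S_l)$ inside this decomposition; the goal becomes $D(n,r)_l=M_l=\bigoplus_I D_{[I]}$.

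The key combinatorial step is: for $w\in\frak S_r$ with $w(I)=J$, writing $J=\{j_1<\cdots<j_l\}$ and defining $\pi_w\in\frak S_l$ by $w(i_k)=j_{\pi_w(k)}$, one has $\phi_J\circ\Psi(w)|_{\underline{V}_I^{\otimes r}}\circ\phi_I^{-1}=x_{\pi_w}$. Granting this, I treat $\bigoplus_I D_{[I]}$ first: fixing $I,J$ and letting $w$ range over $\{w\in\frak S_r:w(I)=J\}$, $\pi_w$ attains every element of $\frak S_l$; since $n\geq r\geq l$, the classical faithfulness $\mathbb{C}\frak S_l\hookrightarrow\text{End}_\mathbb{C}(V^{\otimes l})$ makes $\{x_\pi\}_{\pi\in\frak S_l}$ a basis of $\mathbb{C}\Psi(\frak S_l)$. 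Hence the map $\sigma\mapsto\sigma|_{\underline{V}_I^{\otimes r}}$ sends $\mathbb{C}\Psi(\frak S_r)$ onto $\bigoplus_{\sharp J=l}\mathbb{C}\Psi(\frak S_l)$, so $D_{[I]}$ is exactly the ``row-$I$'' component of $M_l$, and summing over $I$ yields $\bigoplus_I D_{[I]}=M_l$.

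For $D(n,r)_l$, I would decompose an element of $\mathcal{H}_r^l$ into pure permutations $w\in\frak S_r$ and mixed words $w_1\mathbf{x}_\tau^{(l)}w_2$ (with $w_1,w_2\in\frak S_r$, $\tau\in\frak S_l$). A direct computation gives $\Xi^{(l)}(w_1\mathbf{x}_\tau^{(l)}w_2)=\Psi(w_1)\circ x_\tau^{\underline{l}}\circ\Psi(w_2)|_{\underline{V}_l^{\otimes r}}$, which is a single-block operator at position $(w_2^{-1}(\underline{l}),w_1(\underline{l}))$ whose block equals $x_{\pi_{w_1}\tau\pi_{w_2}}\in\mathbb{C}\Psi(\frak S_l)$; similarly each block of $\Xi^{(l)}(w)$ lies in $\mathbb{C}\Psi(\frak S_l)$, establishing $D(n,r)_l\subseteq M_l$. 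Conversely, given any $(I,J)$ and any $\tau\in\frak S_l$, pick $w_2,w_1\in\frak S_r$ to be the order-preserving bijections $I\to\underline{l}$ and $\underline{l}\to J$ (extended arbitrarily outside), so that $\pi_{w_1}=\pi_{w_2}=\text{id}$ and $\Xi^{(l)}(w_1\mathbf{x}_\tau^{(l)}w_2)$ becomes the single-block operator at $(I,J)$ with block $x_\tau$. Linear combinations of these realize every element of $M_l$, giving $D(n,r)_l\supseteq M_l$ and completing $D(n,r)_l=M_l=\bigoplus_I D_{[I]}$. The main obstacle is the combinatorial identification $\Psi(w)|_{\underline{V}_I^{\otimes r}}\leftrightarrow x_{\pi_w}$, which requires careful bookkeeping of the relative orderings of $I$ and $w(I)$ but is otherwise routine; the hypothesis $n\geq r$ enters solely through the Schur-Weyl faithfulness $\mathbb{C}\frak S_l\hookrightarrow\text{End}_\mathbb{C}(V^{\otimes l})$ at level $l\leq r\leq n$.
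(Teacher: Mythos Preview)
Your argument is correct and rests on the same combinatorial identification as the paper's: each $(\Psi(w))^{[I]}$ is a single $(I,w(I))$-block operator whose block, after the natural identification $\underline{V}_I^{\otimes r}\cong V^{\otimes l}$, is exactly $x_{\pi_w}\in\mathbb{C}\Psi(\frak S_l)$, and these fill out all of $\bigoplus_{I,J}\mathbb{C}\Psi(\frak S_l)$. The execution differs, however. The paper does not introduce your intermediate space $M_l$; instead it quotes from \cite[Remark 3.6]{BYY} the explicit basis $\{E_{J,I}\circ x_\sigma^I : \sharp I=\sharp J=l,\ \sigma\in\frak S_l\}$ of $D(n,r)_l$ (valid because $n\geq r$), and then checks directly that each such basis element equals $(\Psi(\varepsilon_{J,I}\sigma'))^{[I]}\in D_{[I]}$ and conversely that each $(\Psi(\tau))^{[I]}$ can be written as $E_{\tau(I),I}\circ x_\sigma^I$. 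Your route is more self-contained: you essentially re-prove the spanning set of $D(n,r)_l$ by analysing $\Xi^{(l)}$ on words $w_1\mathbf{x}_\tau^{(l)}w_2$ and on pure permutations, whereas the paper takes this as given. In exchange the paper's proof is shorter, needing only the two one-line matchings above. Both arguments invoke $n\geq r$ at the same place---yours for faithfulness of $\mathbb{C}\frak S_l$ on $V^{\otimes l}$, the paper's for the cited basis, which is the same fact.
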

\begin{proof}
	Since $n \geq r$, \cite[Remark 3.6]{BYY} implies that $D(n,r)_l$ has a basis as follows 
	\begin{align*}
	\{E_{J,I} \circ x_{\sigma}^I \mid I,J \subseteq \underline{r}, \sharp I=\sharp J=l, \sigma \in \frak S_l\}.
	\end{align*} 

	For $x_{\sigma}^I$ with $\sharp I=l$, there exists $\sigma' \in \frak S_r$ such that $(\Psi(\sigma'))^{[I]}=x_{\sigma}^I$. Thus $E_{J,I} x_{\sigma}^I=(\Psi(\varepsilon_{J,I}))^{[I]} \cdot (\Psi(\sigma'))^{[I]}=(\Psi(\varepsilon_{J,I} \cdot \sigma'))^{[I]} \in D_{[I]}$, which yields that $D(n,r)_l \subseteq \bigoplus\limits_{I \subseteq \underline{r},~\sharp I=l}D_{[I]}$.
	
	Conversely, if $\tau \in \frak S_r$, in order to conclude $D_{[I]} \subseteq D(n,r)_l$ with $\sharp I=l$, it suffices to show $(\Psi(\tau))^{[I]} \in D(n,r)_l$. Suppose $I=\{i_1, \cdots, i_l\}$ with $i_1 < \cdots <i_l$ and $\tau(I)=\{j_1, \cdots, j_l\}$ with $j_1 < \cdots < j_l$. Moreover, assume that $m_1, \cdots, m_l$ are different numbers in $\underline{l}$ satisfying $\tau(i_t)=j_{m_t}$ ($\forall t \in \underline{l}$). There exist $\sigma' \in \frak S_r$ satisfying $\sigma'(i_t)=i_{m_t}$ ($\forall 1 \leq t \leq l$) and set $\sigma=\begin{pmatrix}
	1 & \cdots & l \\
	m_1 & \cdots & m_l
	\end{pmatrix} \in \frak S_l$. Recall that $\varepsilon_{\tau(I),I}(i_t)=j_t$ for all $1 \leq t \leq l$ (i.e. $\varepsilon_{\tau(I),I}(i_{m_t})=j_{m_t}$ for all $1 \leq t \leq l$).
	 It is not hard to see that $(\Psi(\tau))^{[I]}=(\Psi(\varepsilon_{\tau(I),I} \cdot \sigma'))^{[I]}=(\Psi(\varepsilon_{\tau(I),I}))^{[I]} \cdot (\Psi(\sigma'))^{[I]}=E_{\tau(I),I} \cdot x_\sigma^I \in D(n,r)_l$.
	 
	 For all $I \subseteq \underline{r}$ with $\sharp I=l$, we have $D_{[I]} \subseteq D(n,r)_l$. Hence $\bigoplus\limits_{I \subseteq \underline{r},~\sharp I=l}D_{[I]} \subseteq D(n,r)_l$.
\end{proof}
\begin{remark}\label{decom}
Lemma \ref{structure} and (\ref{sum}) implies that $D(n,r)=\bigoplus\limits_{I \subseteq \underline{r}}D_{[I]}$.
\end{remark}

Keep in mind that $n \geq r$. Fix a basis $\{v_1,\cdots,v_n\}$ of $V$, and recall that $\underline{V}=V \oplus \mathbb{C}\eta$. Given $w \in V$ and $J \subseteq \underline{r}$, we define $a_i \in \underline{V}$ ($1 \leq i \leq r$) and $b_i \in \underline{V}$ ($1 \leq i \leq r$) as follows:
\begin{align*}
\text{If } i \in J, \text{ set } a_i:=w+\eta; \text{ if } i \notin J, \text{ set } a_i:=v_i.
\end{align*}                      
\begin{align*}
\text{If } i \in J, \text{ set }b_i:=\eta; \text{ if } i \notin J, \text{ set } b_i:=v_i.
\end{align*}
Let $A_w^J:=a_1 \otimes \cdots \otimes a_r-b_1 \otimes \cdots \otimes b_r \in \underline{V}^{\otimes r}$, we have the following Lemma.

\begin{lemma}\label{key lemma}
	Suppose $n \geq r$. For $\delta \in \mathbb{C} \Psi(\frak S_r)$ and $\phi \neq J \subseteq \underline{r}$, if $\forall w \in V$, we always have $\delta(A_w^J)=0$, then $\delta(\underline{V}_{\underline{r} \backslash J}^{\otimes r})=0$.
\end{lemma}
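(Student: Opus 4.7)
The plan is to extract from the hypothesis $\delta(A_w^J)=0$ a relation that forces $\delta$ to annihilate the $\frak S_J$-symmetric part of $\underline{V}^{\otimes r}$, and then to observe that every tensor in $\underline{V}_{\underline{r}\setminus J}^{\otimes r}$ is itself $\frak S_J$-invariant.

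First I expand the definition as $A_w^J=\sum_{\emptyset\neq K\subseteq J}T_{K,w}$, where $T_{K,w}$ is the tensor with $w$ at every position of $K$, $\eta$ at every position of $J\setminus K$, and $v_i$ at every position $i\notin J$. Each $T_{K,w}$ lies in $\underline{V}_{r-\sharp J+\sharp K}^{\otimes r}$; since permutations preserve $\sharp I$, each $\Psi(\sigma)$, and hence $\delta$, respects the decomposition $\underline{V}^{\otimes r}=\bigoplus_l \underline{V}_l^{\otimes r}$. Thus the hypothesis splits according to the value of $\sharp K$, and the component $\sharp K=\sharp J$ (the one in which all positions carry vectors from $V$) yields
\[
\delta(T_{J,w})=0\qquad\text{for every }w\in V.
\]

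Next I polarize. Writing $J=\{j_1,\ldots,j_s\}$ with $s=\sharp J$ and $w=\sum_{k=1}^{s} c_k v_{j_k}$, the tensor $T_{J,w}$ becomes a polynomial in $c_1,\ldots,c_s$. Using the convention $\Psi(\sigma)(v_1\otimes\cdots\otimes v_r)=v_{\sigma^{-1}(1)}\otimes\cdots\otimes v_{\sigma^{-1}(r)}$, a short bookkeeping identifies the coefficient of $c_1 c_2\cdots c_s$ with $\mathrm{Sym}_J(T_0)$, where $T_0:=v_1\otimes\cdots\otimes v_r$ and $\mathrm{Sym}_J:=\sum_{\rho\in\frak S_J}\Psi(\rho)$ with $\frak S_J\subseteq\frak S_r$ the pointwise stabilizer of $\underline{r}\setminus J$. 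Consequently $(\delta\circ\mathrm{Sym}_J)(T_0)=0$. Since $n\geq r$, the $r!$ basis tensors $\Psi(\sigma)(T_0)$ are pairwise distinct, so the evaluation map $\alpha\mapsto\alpha(T_0)$ is injective on $\mathbb{C}\Psi(\frak S_r)$, and the preceding scalar identity is promoted to the operator identity
\[
\delta\circ\mathrm{Sym}_J=0\qquad\text{in }\mathbb{C}\Psi(\frak S_r)\subseteq\text{End}_{\mathbb{C}}(\underline{V}^{\otimes r}).
\]

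To conclude, take any $U\in\underline{V}_{\underline{r}\setminus J}^{\otimes r}$. Its entries at the positions of $J$ are all equal to $\eta$, so $\Psi(\rho)(U)=U$ for every $\rho\in\frak S_J$; hence $\mathrm{Sym}_J(U)=s!\cdot U$ and $s!\,\delta(U)=(\delta\circ\mathrm{Sym}_J)(U)=0$, which gives $\delta(U)=0$ as required. The step I expect to require the most care is the polarization: matching the $c_1\cdots c_s$-coefficient of $T_{J,w}$ with $\mathrm{Sym}_J(T_0)$ (as opposed to a closely related but different symmetrization) is purely combinatorial but easy to botch under the $\sigma^{-1}$-convention of $\Psi$. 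The remaining ingredients — grading preservation by $\delta$, faithful evaluation at $T_0$, and $\frak S_J$-invariance of $U$ — are all clean consequences of the standing hypothesis $n\geq r$.
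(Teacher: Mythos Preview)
Your proof is correct, and the overall strategy coincides with the paper's at the first step but is organized more cleanly thereafter. Both arguments begin by extracting from $\delta(A_w^J)=0$ the identity $\delta\bigl(\mathrm{Sym}_J(v_1\otimes\cdots\otimes v_r)\bigr)=0$: the paper does this by specializing $w=\sum_{j\in J}v_j$ and isolating the terms that contain all of $v_1,\ldots,v_r$, while you reach the same conclusion by first using the grading $\underline{V}^{\otimes r}=\bigoplus_l\underline{V}_l^{\otimes r}$ to reduce to $\delta(T_{J,w})=0$ and then polarizing. From that point the routes diverge. The paper writes $\delta=\sum_i\Psi(\phi_i)\circ T_i$ along a decomposition of $\frak S_r$ by $(\frak S_{\underline r\setminus J}\times\frak S_J)$-cosets, separates the summands by where they send the positions $\underline r\setminus J$, and shows term by term that each $T_i$ kills $\underline V_{\underline r\setminus J}^{\otimes r}$. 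Your argument replaces all of this coset bookkeeping with a single observation: since $n\geq r$, evaluation at $T_0=v_1\otimes\cdots\otimes v_r$ is injective on $\mathbb{C}\Psi(\frak S_r)$, so the scalar equation promotes to the operator identity $\delta\circ\mathrm{Sym}_J=0$, after which the $\frak S_J$-invariance of every $U\in\underline V_{\underline r\setminus J}^{\otimes r}$ gives $\delta(U)=0$ immediately. Your approach is shorter and more conceptual; the paper's is more explicit at the level of basis elements but otherwise buys nothing additional.
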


\begin{proof}
	We assume that $\sharp J=r-k$ (i.e. $\sharp (I \backslash J)=k$). Obviously, it is enough for us to prove the case $J=\underline{r} \backslash \underline{k}=\{k+1,\cdots,r\}$.
	
	Now we supoose $J=\{k+1,\cdots,r\}$ and take $w=v_{k+1}+\cdots+v_r$, then $A_w^J=v_1 \otimes \cdots \otimes v_k \otimes (v_{k+1}+\cdots+v_r+\eta)^{\otimes (r-k)}-v_1 \otimes \cdots \otimes v_k \otimes \eta^{\otimes (r-k)}$. Set $G_1:=\langle(i,i+1) \mid 1\leq i \leq k-1\rangle$ and $G_2:=\langle(i,i+1) \mid k+1\leq i \leq r-1\rangle$, then $G_1 \cong \frak S_k$ and $G_2 \cong \frak S_{r-k}$. For all $s \in \frak S_r$, we define $\underline{s}:=\{s(x) \mid x \in \underline{k} \text{ and } k+1 \leq s(x) \leq r\}$ and $\bar{s}:=\{s(x) \mid x \in \underline{r}\backslash\underline{k} \text{ and } 1\leq s(x) \leq k\}$. It is clear that $\underline{s}=(\underline{r}\backslash\underline{k})\backslash\{{s(x) \mid x \in \underline{r}\backslash\underline{k} \text{ and } s(x) \in \underline{r}\backslash\underline{k}}\}$, $\bar{s}=\underline{k} \backslash \{s(x) \mid x \in \underline{k} \text{ and } s(x) \in \underline{k}\}$ and $\sharp \underline{s}=\sharp \bar{s}$. For $B \subseteq \underline{k}$ and $A \subseteq \underline{r}\backslash\underline{k}=\{k+1,\cdots,r\}$, if $\sharp A=\sharp B$, set $\Psi_{A,B}:=\sum\limits_{s \in \frak S_r \atop \underline{s}=A,~\bar{s}=B}\mathbb{C}\Psi(s)$; if $\sharp A \neq \sharp B$, there does not exist $s \in \frak S_r$ satisfying $\underline{s}=A$ and $\bar{s}=B$, then we set $\Psi_{A,B}:=0$. Obviously, $\mathbb{C} \Psi(\frak S_r)=\sum\limits_{A \subseteq \underline{r}}\sum\limits_{B \subseteq \underline{r}}\Psi_{A,B}=\sum\limits_{A \subseteq \underline{r}\backslash\underline{k},~B \subseteq \underline{k} \atop \sharp A=\sharp B}\Psi_{A,B}$.
	
	Clearly, for $s \in \frak S_r$, $\underline{s}=\phi \Leftrightarrow \bar{s}=\phi \Leftrightarrow s \in G_1G_2$. Given $A \subseteq \underline{r}\backslash\underline{k}$ and $B \in \underline{k}$ with $\sharp A=\sharp B=q \neq 0$ ($\Rightarrow A \neq \phi, B \neq \phi$), assume that $A=\{i_1, \cdots i_q\}$ and $B=\{j_1, \cdots j_q\}$, now we set $\theta=(i_1,j_1) \cdots (i_q,j_q)$. For all $\varepsilon \in \frak S_r$ with $\underline{\varepsilon}=A$ and $\bar{\varepsilon}=B$, we have $\theta \cdot \varepsilon \in G_1 G_2=\langle G_1,G_2 \rangle$.
	
	Hence fix $A \subseteq \underline{r}\backslash\underline{k}$ and $B \subseteq \underline{k}$ with $\sharp A=\sharp B$, $\forall \varepsilon \in \frak S_r$, we have $\theta \cdot \varepsilon \in G_1 G_2$ (Here $\theta:=1=\text{id}$, if $A=B=\phi$). Namely,
	\begin{align}\label{tran}
	\varepsilon=\theta^{-1} \cdot \varepsilon'=\theta \cdot \varepsilon', \text{ where } \varepsilon'=\theta\varepsilon \in G_1G_2, \text{ and note that } \theta=\theta^{-1} \in \frak S_r \backslash (G_1G_2).
	\end{align}
	
	The terms in $A_w^J$ containing $v_1, \cdots ,v_r$ can be written as $\Psi(s)(v_1 \otimes \cdots \otimes v_r)$, where $s$ ranges over $G_2$. Hence $\delta(A_w^J)=0$ implies that $\delta(\sum_{s \in G_2}\Psi(s)(v_1 \otimes \cdots \otimes v_r))=0$. Thanks to $\mathbb{C} \Psi(\frak S_r)=\sum\limits_{A \subseteq \underline{r}}\sum\limits_{B \subseteq \underline{r}}\Psi_{A,B}$ and (\ref{tran}), there exist $m \in \mathbb{Z}_{>0}$, $\phi_1,\cdots,\phi_m \in \frak S_r \backslash (G_1G_2)$ and $T_0,T_1,\cdots,T_m \in \mathbb{C} \Psi(G_1G_2)$, such that $\delta=T_0+\sum_{i=1}^{m}\Psi(\phi_i) \circ T_i=\sum_{i=0}^{m}\Psi(\phi_i) \circ T_i$ (Here $\phi_0=1$), and satisfying 
	\begin{align}\label{Sum}
	\Psi(\phi_i) \circ T_i \in \Psi_{A_i,B_i} \text{ and } \Psi(\phi_i) \in \Psi_{A_i,B_i} \text{, with } 0 \leq i \leq m, A_i \subseteq \underline{r}\backslash\underline{k}, B_i \subseteq \underline{k}, \sharp A_i=\sharp B_i \neq 0.
	\end{align}
	 as well as $A_i \neq A_j$ or $B_i \neq B_j$, $\forall 0 \leq i \neq j \leq m$, i.e.
	\begin{align}\label{diff}
	 (A_i,B_i) \neq (A_j,B_j) \text{ for all } 0 \leq i \neq j \leq m.
	\end{align}
	By (\ref{Sum}), we see that 
	\begin{align}\label{chaifen}
	\Psi(\phi_i) \circ T_i \circ (\sum_{s \in G_2}\Psi(s)) \in \Psi_{A_i,B_i}.
	\end{align} 
	
	We have $(T_0 \circ (\sum_{s \in G_2}\Psi(s)))(v_1\otimes \cdots \otimes v_r)+(\Psi(\phi_1) \circ T_1 \circ (\sum_{s \in G_2}\Psi(s)))(v_1\otimes \cdots \otimes v_r)+\cdots+(\Psi(\phi_m) \circ T_m \circ (\sum_{s \in G_2}\Psi(s)))(v_1\otimes \cdots \otimes v_r)=0$. Due to (\ref{diff}) and (\ref{chaifen}), it yields that 
	\begin{align*}
	(\Psi(\phi_i) \circ T_i \circ (\sum_{s \in G_2}\Psi(s)))(v_1\otimes \cdots \otimes v_r)=0, \text{ } \forall i=0,1,\cdots,m.
	\end{align*}
	Thus we have
	\begin{align*}
	(\Psi(\phi_i^{-1}) \circ \Psi(\phi_i) \circ T_i \circ (\sum_{s \in G_2}\Psi(s)))(v_1\otimes \cdots \otimes v_r)=0, \text{ } \forall i=0,1,\cdots,m.
	\end{align*}
	Namely
	\begin{align}\label{claim}
	 (T_i \circ (\sum_{s \in G_2}\Psi(s)))(v_1\otimes \cdots \otimes v_r)=0, \text{ } \forall i=0,1,\cdots,m.
	\end{align}
	
Since $T_i \in \mathbb{C} \Psi(G_1G_2)$ and $n \geq r$, there exist $l \in \mathbb{N}^*$, $k_j \in \mathbb{C}$, $g_t^{(j)} \in G_t$ with $t=1,2$ and $j \in \underline{l}=\{1,\cdots,l\}$, such that $T_i=\sum_{j=1}^{l}k_j\Psi(g_1^{(j)}) \circ \Psi(g_2^{(j)})$. Hence 
\begin{align*}
	T_i \circ (\sum_{s \in G_2}\Psi(s))=(\sum_{j=1}^l k_j \Psi(g_1^{(j)}) \circ \Psi(g_2^{(j)})) \circ (\sum_{s \in G_2}\Psi(s))=\sum_{j=1}^l \sum_{s \in G_2} k_j \Psi(g_1^{(j)}) \circ \Psi(g_2^{(j)}) \circ \Psi(s)
\end{align*}
\begin{align*}
=\sum_{j=1}^l  k_j \Psi(g_1^{(j)}) (\sum_{s \in G_2} \Psi(g_2^{(j)}\cdot s))=(\sum_{j=1}^l  k_j \Psi(g_1^{(j)}))(\sum_{s \in G_2} \Psi(s)).
\end{align*}
So $(\sum_{j=1}^{l}k_j\Psi(g_1^{(j)})) \circ (\sum_{s \in G_2}\Psi(s))(v_1\otimes \cdots \otimes v_r)=0$. Thanks to $G_1 \cap G_2=\{1\}$, we can conclude that 
\begin{align}\label{2.5}
(\sum_{j=1}^{l}k_j\Psi(g_1^{(j)}))(v_1\otimes \cdots \otimes v_r)=0.
\end{align}
Since $\sum_{j=1}^{l}k_j\Psi(g_1^{(j)}) \in \mathbb{C}\Psi(G_1)$, there exists $h_i \in G_1, c_i \in \mathbb{C}$ with $1 \leq i \leq t$ satisfying $h_p \neq h_q$ for all $1 \leq p<q \leq t$, such that $\sum_{j=1}^{l}k_j\Psi(g_1^{(j)})=\sum_{i=1}^{t}c_i\Psi(h_i)$. By (\ref{2.5}), we see that $\sum_{i=1}^{t}c_i\Psi(h_i)(v_1\otimes \cdots \otimes v_r)=0$. Due to $h_1, \cdots, h_t$ are all different, $\Psi(h_1)(v_1\otimes \cdots \otimes v_r), \cdots, \Psi(h_t)(v_1\otimes \cdots \otimes v_r)$ are linear independent vectors, which implies that $c_1=\cdots=c_t=0$. Hence 
\begin{align}\label{0}
\sum_{j=1}^{l}k_j\Psi(g_1^{(j)})=0.
\end{align}
Note that $\underline{V}_{\underline{k}}^{\otimes r}=V^{\otimes k} \otimes (\mathbb{C}\eta)^{\otimes (r-k)}$, thus $\Psi(g_2^{(j)})|_{\underline{V}_{\underline{k}}^{\otimes r}}=\text{id}_{\underline{V}_{\underline{k}}^{\otimes r}}$. This yields that
\begin{align*}
T_i|_{\underline{V}_{\underline{k}}^{\otimes r}}=\sum_{j=1}^{l}(k_j\Psi(g_1^{(j)}) \circ \Psi(g_2^{(j)}))|_{\underline{V}_{\underline{k}}^{\otimes r}}=\sum_{j=1}^{l} k_j\Psi(g_1^{(j)})|_{\underline{V}_{\underline{k}}^{\otimes r}}\xlongequal{\text{By } (\ref{0})}0.
\end{align*}
Thus $T_i|_{\underline{V}_{\underline{k}}^{\otimes r}}=0$ is proved.
\end{proof}

Now we begin to prove \cite[Conjecture 5.4]{BYY}.

\begin{thm}\label{conjecture}(\cite[Conjecture 5.4]{BYY})
If $n \geq r$, we have $D(n,r)^V=\mathbb{C}\Psi(\frak S_r)$, i.e. $\text{End}_{\underline{G} \rtimes \textbf{G}_m}(\underline{V}^{\otimes r})=\mathbb{C}\Psi(\frak S_r)$.
\end{thm}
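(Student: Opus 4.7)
The plan is to first dispose of the easy containment $\mathbb{C}\Psi(\mathfrak{S}_r) \subseteq D(n,r)^V$: this is immediate from classical Schur--Weyl, since every $\Psi(\sigma)$ centralizes $\Phi(\GL(\underline{V}))$, and $\Phi(e^v) \in \Phi(\GL(\underline{V}))$. All the content lies in the reverse inclusion. For this I would exploit the decomposition of Remark~\ref{decom}: any $\phi \in D(n,r)^V$ writes uniquely as $\phi = \sum_{I \subseteq \underline{r}} \sigma_I^{[I]}$ with $\sigma_I \in \mathbb{C}\Psi(\mathfrak{S}_r)$. Since $n \geq r$, classical Schur--Weyl makes $\Psi$ faithful on $V^{\otimes r} = \underline{V}_{\underline{r}}^{\otimes r}$, so the top-grade piece $\sigma_{\underline{r}}$ is determined as a genuine element of $\mathbb{C}\Psi(\mathfrak{S}_r)$; call it $\tau$. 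Then $\psi := \phi - \tau$ still lies in $D(n,r)^V$ and has $\psi_{\underline{r}} = 0$, so the problem reduces to proving $\psi = 0$.

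The engine is the commutation $[\Phi(e^v), \psi] = 0$ combined with the natural filtration on $\underline{V}^{\otimes r}$. Write $\Phi(e^v) = \mathrm{id} + \Delta_v$; since $e^v$ fixes $V$ pointwise and sends $\eta \mapsto v + \eta$, the operator $\Delta_v$ strictly raises the index $l$ in the grading $\underline{V}^{\otimes r} = \bigoplus_l \underline{V}_l^{\otimes r}$, whereas every $\sigma_I^{[I]}$ preserves that index and $\sigma_I \in \mathbb{C}\Psi(\mathfrak{S}_r)$ centralizes $\Phi(e^v)$. I would then induct downward on $|I|$, the base $|I| = r$ being the construction. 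For the inductive step, evaluating the commutator on $u \in \underline{V}_I^{\otimes r}$ and using the hypothesis $\sigma_K^{[K]} = 0$ for $K \supsetneq I$ to kill the contributions of $\psi$ to $\Delta_v(u)$, the equation collapses to $\sigma_I(\Delta_v(u)) = 0$ for every $v$. Specializing $u$ to the vector $u_J := b_1 \otimes \cdots \otimes b_r$ with $b_i = v_i$ for $i \notin J$ and $b_i = \eta$ for $i \in J$, where $J := \underline{r} \setminus I$, a direct computation gives $\Delta_v(u_J) = A_v^J$; hence $\sigma_I(A_v^J) = 0$ for every $v \in V$.

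Because $I \subsetneq \underline{r}$ yields $J \neq \emptyset$, Lemma~\ref{key lemma} applies and produces $\sigma_I(\underline{V}_I^{\otimes r}) = 0$, i.e.\ $\sigma_I^{[I]} = 0$; the induction closes and we obtain $\psi = 0$, hence $\phi = \tau \in \mathbb{C}\Psi(\mathfrak{S}_r)$. The main technical obstacle is pre-packaged in Lemma~\ref{key lemma}, which extracts vanishing on all of $\underline{V}_I^{\otimes r}$ from vanishing only on the one-parameter family $\{A_w^J\}_{w \in V}$; once that is granted, the remaining effort is filtration bookkeeping on top of the commutation of $\sigma_I$ with $\Phi(e^v)$ and the faithfulness of $\Psi$ on $V^{\otimes r}$ forced by $n \geq r$.
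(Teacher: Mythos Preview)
Your proposal is correct and matches the paper's approach: both use the decomposition of Remark~\ref{decom}, induct downward on $|I|$, exploit the commutation with $\Phi(e^w)$ applied to the vector $v_1\otimes\cdots\otimes v_k\otimes\eta^{\otimes(r-k)}$ to produce an identity of the form $\delta(A_w^J)=0$, and then close the induction via Lemma~\ref{key lemma}. The only difference is cosmetic---you subtract $\tau=\sigma_{\underline{r}}$ at the outset and prove the remainder $\psi$ vanishes, whereas the paper carries $\tau$ through the induction showing $\tau_I^{[I]}=\tau^{[I]}$ at each stage (note also a harmless notational slip in your write-up, where $\sigma_K$ silently switches from the components of $\phi$ to those of $\psi$).
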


\begin{proof}
	Clearly, Schur-Weyl duality implies that $\mathbb{C}\Psi(\frak S_r)=\text{End}_{\GL(\underline{V})}(\underline{V}^{\otimes r}) \subseteq \text{End}_{\underline{G} \rtimes \textbf{G}_m}(\underline{V}^{\otimes r})$, i.e.
	$\mathbb{C}\Psi(\frak S_r) \subseteq D(n,r)^V$. Now we just need to show that $D(n,r)^V \subseteq \mathbb{C}\Psi(\frak S_r)$. 
	
	$\forall \sigma \in D(n,r)^V$, Remark \ref{decom} implies that there exist $\tau_I \in \mathbb{C}\Psi(\frak S_r)$ with all $I \subseteq \underline{r}$, such that $\sigma=\sum_{I \subseteq \underline{r}} \tau_I^{[I]}$ (Note that $\tau_I^{[I]}$ is defined in (\ref{def})). Now we use induction to prove.
	
	If $\sharp I=r$, then it must have $I=\underline{r}$. So we assume that there exist $\tau \in \mathbb{C}\Psi(\frak S_r)$ with $0 \leq k \leq r-1$, such that $\forall I \subseteq \underline{r}$ with $\sharp I \geq k+1$ we have $\tau_I^{[I]}=\tau^{[I]}$ ($\tau^{[I]}$ is defined in (\ref{def})). Then $\sigma=\sum\limits_{I \subseteq \underline{r}, ~ \sharp I \leq k} \tau_I^{[I]}+\sum\limits_{I \subseteq \underline{r}, ~ \sharp I >k} \tau^{[I]}$. Recall that $\{v_1, \cdots, v_n\}$ is a basis of $V$ and $\underline{V}=V \oplus \mathbb{C}\eta$, it is easy to see $\forall w \in V,e^w(\eta)=w+\eta$ and $e^w|_{V}=\text{id}_V$.
	
	For $w \in V$, we have $v_1 \otimes \cdots \otimes v_k \otimes (w+\eta)^{\otimes (r-k)}=v_1 \otimes \cdots \otimes v_k \otimes \eta^{\otimes (r-k)}+A_w^{\underline{r} \backslash \underline{k}}$, where $A_w^{\underline{r} \backslash \underline{k}}=A_w^{\{k+1,\cdots,r\}}$ is defined as in Lemma \ref{key lemma}.
	\begin{align*}
	(\sigma \circ \Phi(e^w))(v_1 \otimes \cdots \otimes v_k \otimes \eta^{\otimes (r-k)})
	=\sigma(v_1 \otimes \cdots \otimes v_k \otimes (w+\eta)^{\otimes (r-k)})
	\end{align*}
	\begin{align*}
	=(\sum\limits_{I \subseteq \underline{r}, ~ \sharp I \leq k} \tau_I^{[I]}+\sum\limits_{I \subseteq \underline{r}, ~ \sharp I >k} \tau^{[I]})(v_1 \otimes \cdots \otimes v_k \otimes (w+\eta)^{\otimes (r-k)})
	\end{align*}
	\begin{align*}
	=(\tau_{\underline{k}}^{[\underline{k}]}+\sum\limits_{I \subseteq \underline{r}, ~ \sharp I >k} \tau^{[I]})(v_1 \otimes \cdots \otimes v_k \otimes (w+\eta)^{\otimes (r-k)})
	\end{align*}
	\begin{align*}
	=\tau_{\underline{k}}^{[\underline{k}]}(v_1 \otimes \cdots \otimes v_k \otimes \eta^{\otimes (r-k)})+\tau(A_w^{\underline{r} \backslash \underline{k}})
	\end{align*}
	\begin{align}\label{eq1}
	=\tau_{\underline{k}}(v_1 \otimes \cdots \otimes v_k \otimes \eta^{\otimes (r-k)})+\tau(A_w^{\underline{r} \backslash \underline{k}}).
	\end{align}
	\begin{align*}
	(\Phi(e^w) \circ \sigma)(v_1 \otimes \cdots \otimes v_k \otimes \eta^{\otimes (r-k)})
	=\Phi(e^w)(\tau_{\underline{k}}^{[\underline{k}]}(v_1 \otimes \cdots \otimes v_k \otimes \eta^{\otimes (r-k)}))
	\end{align*}
	\begin{align*}
	=\Phi(e^w)(\tau_{\underline{k}}(v_1 \otimes \cdots \otimes v_k \otimes \eta^{\otimes (r-k)}))
	\end{align*}
	\begin{align*}
	=\tau_{\underline{k}}(\Phi(e^w)(v_1 \otimes \cdots \otimes v_k \otimes \eta^{\otimes (r-k)})) \text{ }(\text{By Schur-Weyl duality})
	\end{align*}
	\begin{align*}
	=\tau_{\underline{k}}(v_1 \otimes \cdots \otimes v_k \otimes (w+\eta)^{\otimes (r-k)}) 
	\end{align*}
	\begin{align}\label{eq2}
	=\tau_{\underline{k}}(v_1 \otimes \cdots \otimes v_k \otimes \eta^{\otimes (r-k)})+\tau_{\underline{k}}(A_w^{\underline{r} \backslash \underline{k}}).
	\end{align}
	
Since $\sigma \in D(n,r)^V$, i.e. $\Phi(e^w) \circ \sigma=\sigma \circ \Phi(e^w)$, ($\ref{eq1}$) and ($\ref{eq2}$) yields that $\forall w \in V$, $\tau(A_w^{\underline{r} \backslash \underline{k}})=\tau_{\underline{k}}(A_w^{\underline{r} \backslash \underline{k}})$. Similarly, $\forall w \in V$ and $\forall J \subseteq \underline{r}$ with $\sharp J=r-k$, we have $\tau(A_w^{J})=\tau_{\underline{r} \backslash J}(A_w^{J})$, namely $(\tau-\tau_{\underline{r} \backslash J})(A_w^{J})=0$. Due to Lemma $\ref{key lemma}$ (note that $k \leq r-1$), we conclude that $(\tau-\tau_{\underline{r} \backslash J})(\underline{V}_{\underline{r}\backslash J}^{\otimes r})=0$. Thus $\forall I \subseteq \underline{r}$ with $\sharp I=k$, we have $(\tau-\tau_I)(\underline{V}_I^{\otimes r})=0$, i.e. $\tau|_{\underline{V}_I^{\otimes r}}=\tau_I|_{\underline{V}_I^{\otimes r}}$, which implies that $\tau^{[I]}=\tau_I^{[I]}$. By induction, we have $\tau^{[I]}=\tau_I^{[I]}$ for all $I \subseteq \underline{r}$. Thus $\sigma=\sum\limits_{I \subseteq \underline{r}} \tau_I^{[I]}=\tau \in \mathbb{C} \Psi(\frak S_r)$, i.e. $D(n,r)^V \subseteq \mathbb{C}\Psi(\frak S_r)$.
\end{proof}

\section{Enhanced tensor invariants.}\label{3}
Keep the notations as in \S\ref{1}. In this section, we still set $G=GL(V) \cong \GL_n$, and identify $\underline{V}^{* \otimes r}$ with $(\underline{V}^{\otimes r})^*$. For convenience, we denote $\underline{n+1}$ by $\mathcal{N}$. Then there is a natural $\underline{G} \rtimes \textbf{G}_m$-module isomorphism $T:\underline{V}^{\otimes r} \otimes \underline{V}^{* \otimes r} \rightarrow \text{End}_\mathbb{C}(\underline{V}^{\otimes r})$. Due to the parabolic Schur-Weyl duality (i.e. Theorem \ref{conjecture}), it is time to describe the enhanced tensor invariants.

Now we still let $\eta_1, \cdots, \eta_n$ be a basis of $V$ and $\underline{V}=V \oplus \mathbb{C} \eta_{n+1}$ with $\eta_{n+1}:=\eta$. Then $\{\eta_i \mid i=1,2,\cdots, n+1\}$ constitute a basis of $\underline{V}$. Let $\{\eta_i^* \mid i=1,2,\cdots, n+1\}$ be the dual basis of $\underline{V}^*$. Given a multi-index $\textbf{i}=(i_1, \cdots, i_r) \in \mathcal{N}^r$, we still set $\eta_\textbf{i}:=\eta_{i_1} \otimes \cdots \otimes \eta_{i_r}$ and $\eta_\textbf{i}^*:=\eta_{i_1}^* \otimes \cdots \otimes \eta_{i_r}^*$. Then $\eta_\textbf{i}$ and $\eta_\textbf{i}^*$ with $\textbf{i}$ ranges over $\mathcal{N}^r$ form a basis of $\underline{V}^{\otimes r}$ and $\underline{V}^{* \otimes r}$ respectively. Moreover, $\{\eta_\textbf{i} \otimes \eta_\textbf{j}^* \mid (\textbf{i},\textbf{j}) \in \mathcal{N}^r \times \mathcal{N}^r\}$ is a basis of $\underline{V}^{\otimes r} \otimes \underline{V}^{* \otimes r}$.

For $\sigma \in \frak S_r$, define a mixed tensor $C_\sigma$ as follows 
\begin{align}\label{C}
C_\sigma=\sum_{\textbf{i} \in \mathcal{N}^r}\eta_{\sigma.\textbf{i}} \otimes \eta_\textbf{i}^*.
\end{align}

\begin{proposition}\label{A inv} (\cite[Proposition 5.5]{BYY})
	When $D(n,r)^V=\mathbb{C}\Psi(\frak S_r)$, then the space of $\underline{G} \rtimes \textbf{G}_m$-invariants in the mixed-tensor product $\underline{V}^{\otimes r} \otimes \underline{V}^{* \otimes r}$ is generated by $C_\sigma$ defined in (\ref{C}) with $\sigma \in \frak S_r$.
\end{proposition}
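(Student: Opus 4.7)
The plan is to transport the question from the tensor side to the endomorphism side via the standard isomorphism $T$ and then invoke Theorem \ref{conjecture} directly. First I would spell out $T: \underline{V}^{\otimes r} \otimes \underline{V}^{*\otimes r} \to \text{End}_\mathbb{C}(\underline{V}^{\otimes r})$ by $T(u \otimes f)(w) = f(w)\,u$ on pure tensors, and verify that it intertwines the tensor action on the left with the conjugation action $\phi \mapsto \Phi(g)\circ\phi\circ\Phi(g)^{-1}$ on the right. This is routine given the standard identity $(g.f)(w) = f(g^{-1}w)$, and it applies uniformly to all $g \in \underline{G} \rtimes \textbf{G}_m$. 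Since taking invariants of $\text{End}_\mathbb{C}(\underline{V}^{\otimes r})$ under conjugation picks out exactly the commutant, $T$ restricts to a vector-space isomorphism $(\underline{V}^{\otimes r} \otimes \underline{V}^{*\otimes r})^{\underline{G} \rtimes \textbf{G}_m} \cong \text{End}_{\underline{G} \rtimes \textbf{G}_m}(\underline{V}^{\otimes r})$.

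Next I would compute $T(C_\sigma)$ on basis elements. Since $\eta_{\mathbf{i}}^*(\eta_{\mathbf{k}}) = \delta_{\mathbf{i},\mathbf{k}}$, evaluating at $\eta_\mathbf{k}$ gives $T(C_\sigma)(\eta_\mathbf{k}) = \sum_{\mathbf{i}\in\mathcal{N}^r}\eta_{\mathbf{i}}^*(\eta_{\mathbf{k}})\,\eta_{\sigma.\mathbf{i}} = \eta_{\sigma.\mathbf{k}} = \Psi(\sigma)(\eta_{\mathbf{k}})$, where the last equality is immediate from the convention $(\sigma.\mathbf{k})_{t} = k_{\sigma^{-1}(t)}$ matching the definition of $\Psi$. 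Hence $T(C_\sigma) = \Psi(\sigma)$, so $T$ maps the span of $\{C_\sigma : \sigma \in \frak S_r\}$ onto $\mathbb{C}\Psi(\frak S_r)$.

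To conclude, I would apply Theorem \ref{conjecture}, which under the hypothesis $D(n,r)^V = \mathbb{C}\Psi(\frak S_r)$ gives $\text{End}_{\underline{G} \rtimes \textbf{G}_m}(\underline{V}^{\otimes r}) = \mathbb{C}\Psi(\frak S_r)$. Combined with the previous step and the injectivity of $T$, this forces the invariant subspace to coincide with the span of $\{C_\sigma : \sigma \in \frak S_r\}$, which is the desired statement. There is essentially no obstacle once Theorem \ref{conjecture} is available; the proof reduces to two bookkeeping verifications (the equivariance of $T$ and the identity $T(C_\sigma) = \Psi(\sigma)$), and all of the genuine content of the statement is already carried by the parabolic Schur--Weyl duality established in Section \ref{2}.
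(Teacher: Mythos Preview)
Your argument is correct and is the standard route: pass through the $\underline{G}\rtimes\textbf{G}_m$-equivariant isomorphism $T:\underline{V}^{\otimes r}\otimes\underline{V}^{*\otimes r}\to\text{End}_\mathbb{C}(\underline{V}^{\otimes r})$, identify $T(C_\sigma)=\Psi(\sigma)$, and read off the invariants from the commutant. Note, however, that the present paper does \emph{not} supply its own proof of this proposition; it is quoted verbatim from \cite[Proposition~5.5]{BYY}, so there is no in-paper argument to compare against.

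One small correction on the citation you invoke at the end. The hypothesis of the proposition is already $D(n,r)^V=\mathbb{C}\Psi(\frak S_r)$; you do not need (and should not cite) Theorem~\ref{conjecture}, whose hypothesis is $n\geq r$. What you actually need in order to pass from $D(n,r)^V$ to $\text{End}_{\underline{G}\rtimes\textbf{G}_m}(\underline{V}^{\otimes r})$ is Theorem~\ref{A parabolic}, which gives $\text{End}_{\underline{\GL_n}\rtimes\textbf{G}_m}(\underline{V}^{\otimes r})=D(n,r)^V$ unconditionally. Combining that identity with the standing hypothesis yields $\text{End}_{\underline{G}\rtimes\textbf{G}_m}(\underline{V}^{\otimes r})=\mathbb{C}\Psi(\frak S_r)$, and then your computation $T(C_\sigma)=\Psi(\sigma)$ finishes the proof. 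With that reference adjusted, the argument is complete.
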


Obviously, Theorem \ref{conjecture} and Propsition \ref{A inv} yield the following proposition.

\begin{proposition}
	If $n \geq r$, then the space of $\underline{G} \rtimes \textbf{G}_m$-invariants in the mixed-tensor product $\underline{V}^{\otimes r} \otimes \underline{V}^{* \otimes r}$ is generated by $C_\sigma$ with $\sigma \in \frak S_r$.
\end{proposition}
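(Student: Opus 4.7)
The plan is to present this as a short corollary of the two results just established: Theorem \ref{conjecture} and Proposition \ref{A inv}. Since Proposition \ref{A inv} is stated conditionally on the equality $D(n,r)^V=\mathbb{C}\Psi(\frak S_r)$, and Theorem \ref{conjecture} supplies exactly that hypothesis under the assumption $n\geq r$, the proof is simply a matter of chaining the two implications. So I would first invoke Theorem \ref{conjecture} to produce the equality $D(n,r)^V=\mathbb{C}\Psi(\frak S_r)$, and then feed this into Proposition \ref{A inv} to conclude that the invariant subspace $(\underline{V}^{\otimes r}\otimes\underline{V}^{*\otimes r})^{\underline{G}\rtimes \textbf{G}_m}$ is spanned by the mixed tensors $C_\sigma$, $\sigma\in\frak S_r$.

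To make the logical link between endomorphism invariants and tensor invariants fully transparent, I would briefly recall the canonical $\underline{G}\rtimes\textbf{G}_m$-equivariant isomorphism $T:\underline{V}^{\otimes r}\otimes\underline{V}^{*\otimes r}\xrightarrow{\sim}\text{End}_\mathbb{C}(\underline{V}^{\otimes r})$ introduced at the start of \S\ref{3}. Under $T$, a general basis element $\eta_{\textbf{i}}\otimes\eta_{\textbf{j}}^*$ corresponds to the rank-one operator sending $\eta_{\textbf{j}}\mapsto\eta_{\textbf{i}}$ (and annihilating the other basis tensors), so a direct bookkeeping shows $T(C_\sigma)=\Psi(\sigma)$ for every $\sigma\in\frak S_r$. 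Passing to $\underline{G}\rtimes\textbf{G}_m$-fixed vectors, the isomorphism $T$ restricts to an identification
\[
\bigl(\underline{V}^{\otimes r}\otimes\underline{V}^{*\otimes r}\bigr)^{\underline{G}\rtimes \textbf{G}_m}\;\xrightarrow{\sim}\;\text{End}_{\underline{G}\rtimes\textbf{G}_m}(\underline{V}^{\otimes r}).
\]
By Theorem \ref{conjecture}, the right-hand side equals $\mathbb{C}\Psi(\frak S_r)$ when $n\geq r$, hence pulling back through $T^{-1}$ gives that the left-hand side coincides with $\mathbb{C}\langle C_\sigma\mid\sigma\in\frak S_r\rangle$.

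There is essentially no obstacle here: everything nontrivial has been done in proving Theorem \ref{conjecture} (the computation with the vectors $A_w^J$ and Lemma \ref{key lemma}), and Proposition \ref{A inv} already encapsulates the translation between endomorphism-algebra generators and mixed-tensor invariants. If anything subtle deserves a line of comment, it is only the verification that $T$ is $\underline{G}\rtimes\textbf{G}_m$-equivariant, which is standard: the action on $\underline{V}^{\otimes r}\otimes\underline{V}^{*\otimes r}$ is the tensor product of the natural action and its contragredient, and conjugation on $\text{End}_\mathbb{C}(\underline{V}^{\otimes r})$ transports precisely to this action. Thus the statement follows in one line once the two results above are granted, which is exactly the spirit in which the author records it.
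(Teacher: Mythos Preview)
Your proposal is correct and follows exactly the paper's approach: the proposition is recorded there as an immediate consequence of Theorem \ref{conjecture} combined with Proposition \ref{A inv}, with no further argument given. Your additional remarks on the equivariance of $T$ and the identity $T(C_\sigma)=\Psi(\sigma)$ are accurate elaborations but go beyond what the paper actually writes out.
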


\section*{Acknowledgment}
The author thanks Bin Shu for his guidance, and thanks Yufeng Yao for his helpful comments and discussions.

\end{document}